\documentclass[12pt]{article}
\usepackage[utf8]{inputenc}
\usepackage[T1]{fontenc}
\usepackage{amsmath,amssymb,amsthm}
\usepackage{geometry}
\usepackage{hyperref}
\usepackage{indentfirst}
\usepackage{booktabs}
\usepackage{xcolor}
\usepackage{fvextra}
\geometry{margin=1in}

% ===================== Code listing environments =====================
\DefineVerbatimEnvironment{code}{Verbatim}{
  breaklines=true,
  breakanywhere=true,
  fontsize=\small,
  xleftmargin=1em,
}
\DefineVerbatimEnvironment{term}{Verbatim}{
  breaklines=true,
  breakanywhere=true,
  fontsize=\small,
  xleftmargin=1em,
}

% ===================== Theorem environments =====================
\newtheorem{theorem}{Theorem}
\newtheorem{lemma}{Lemma}
\newtheorem{proposition}{Proposition}
\newtheorem{corollary}{Corollary}
\theoremstyle{remark}
\newtheorem{remark}{Remark}
\theoremstyle{definition}

\title{Irreducibility of the Cuboid Polynomial $P_{a,u}(t)$ via a Rank-Zero Elliptic Curve}

\author{Valery Asiryan\\[3pt]
\small \texttt{asiryanvalery@gmail.com}}
\date{\small March 15, 2026}

\begin{document}

\maketitle

% ===================== Abstract & Keywords =====================

\begin{abstract}
In this paper we study the even monic degree-8 cuboid polynomial \(P_{a,u}(t)\) introduced by R.~A.~Sharipov in the first-cuboid specialization of his cuboid equations. For nonzero integers \(a,u\) with \(u^2\neq a^2\) we prove that \(P_{a,u}(t)\) is irreducible in \(\mathbb{Z}[t]\) (equivalently, in \(\mathbb{Q}[t]\)), thus confirming Sharipov's irreducibility conjecture in this two-parameter case. Over \(K=\mathbb{Q}(\sqrt2)\) we have a factorization \(P_{a,u}(t)=H_-(t)H_+(t)\) into two conjugate quartics. We show that any further factorization of \(H_\pm\) would force the discriminant of a certain quadratic in \(S=t^2\) to be a square in \(K\), which in turn implies (via \(\tau=(au/\Delta)^2\)) the existence of a rational point \((y,v)\in\mathcal{C}(\mathbb{Q})\) on the genus-one quartic \(\mathcal{C}:\ v^2=16y^4+136y^2+1\) with \(y^2=\tau\). We give an explicit isomorphism \(\overline{\mathcal{C}}\simeq E\) with the elliptic curve \(E:\ Y^2=X(X-8)(X-9)\), whose Mordell--Weil group has rank \(0\) and conductor \(48\). Enumerating \(E(\mathbb{Q})\) and tracing back to \(\mathcal{C}(\mathbb{Q})\) rules out the only possible values \(\tau\in\{0,\tfrac14\}\), and hence excludes any factorization in \(K[t]\). A quadratic Galois descent then yields the irreducibility of \(P_{a,u}(t)\) over \(\mathbb{Q}\) and \(\mathbb{Z}\).

\medskip
\noindent{\bf Keywords:}
Irreducibility over $\mathbb{Z}$; perfect cuboid; quadratic extensions; genus-one quartics; Jacobians; elliptic curves; rank zero; torsion.

\smallskip
\noindent{\bf Mathematics Subject Classification:} 12E05, 11D09, 11G05, 11R09
\end{abstract}

% ===================== Introduction =====================

\section{Introduction}\label{sec:intro}
The \emph{perfect cuboid problem} asks for a rectangular box with integer edge lengths whose face diagonals and the space diagonal are also integers.

One approach, due to R.~A.~Sharipov, associates to a potential cuboid an even monic polynomial with integer parameters. In the specialization \(a=b\) (the ``first cuboid'' case) this reduces to the two-parameter degree-8 polynomial \(P_{a,u}(t)\) studied below (see \cite{Sharipov2011Cuboids,Sharipov2011Note}).

In \cite{Sharipov2011Cuboids} Sharipov conjectured that in this case \(P_{a,u}(t)\) is irreducible in \(\mathbb{Z}[t]\) for coprime \(a\neq u\) (see Conjecture~5.1 therein). In \cite{Sharipov2011Note} he proved, in particular, that \(P_{a,u}(t)\) has no integer roots. The main theorem of the present paper establishes the full irreducibility statement; in particular it applies to the cuboid setting \(a,u\in\mathbb{Z}_{>0}\), \(\gcd(a,u)=1\), \(a\neq u\).

Following this approach we work with Sharipov's polynomial
\[
P_{a,u}(t)=t^8+6(u^2-a^2)\,t^6+\bigl((u^2-a^2)^2-2a^2u^2\bigr)\,t^4-6(u^2-a^2)a^2u^2\,t^2+a^4u^4\in\mathbb{Z}[t].
\]
For later use we set
\[
\Delta:=u^2-a^2\neq0,\qquad A_0:=a^2u^2,
\]
so that the displayed formula becomes
\[
P_{a,u}(t)=t^8+6\Delta\,t^6+(\Delta^2-2A_0)\,t^4-6\Delta A_0\,t^2+A_0^2.
\]

\medskip
\noindent\textbf{Idea of the proof.}
Over the quadratic field \(K=\mathbb{Q}(\sqrt2)\) the polynomial \(P_{a,u}(t)\) splits as \(H_-(t)\,H_+(t)\), where the quartics \(H_\pm\) are conjugate over \(K\).
Any further factorization of \(H_\pm\) in \(K[t]\) must be even in \(t\) and reduces, after the substitution \(S=t^2\), to the question whether a certain quadratic in \(S\) has discriminant a square in \(K\).
This, in turn, implies (using \(\tau=(au/\Delta)^2\)) the existence of a rational point \((y,v)\in\mathcal{C}(\mathbb{Q})\) on one fixed genus-one quartic curve \(\mathcal{C}\) such that \(y^2=\tau\).
Passing from \(\mathcal{C}\) to its Jacobian yields an elliptic curve
\[
E:\ Y^2=X(X-8)(X-9)=X^3-17X^2+72X,
\]
of conductor \(48\) and rank \(0\).
We give an explicit isomorphism \(\overline{\mathcal{C}}\simeq E\) over \(\mathbb{Q}\) (proposition~\ref{prop:CEiso}), hence \(\overline{\mathcal{C}}(\mathbb{Q})\) is obtained from the finite torsion group \(E(\mathbb{Q})\).
This yields that the only values of the parameter \(\tau=(au/\Delta)^2\) coming from \(\mathcal{C}(\mathbb{Q})\) are \(\tau\in\{0,\tfrac14\}\), which are incompatible with rational (in particular, integer) parameters \(a,u\) satisfying \(au\neq0\) and \(u^2\neq a^2\).

Hence no even factorization of \(H_\pm\) occurs in \(K[t]\), and a standard Galois-descent argument gives the irreducibility of \(P_{a,u}(t)\) over \(\mathbb{Q}\) (hence over \(\mathbb{Z}\)).

\medskip
\noindent\textbf{Outline of the paper.}
In section~\ref{sec:split} we record the split \(P_{a,u}=H_-H_+\) over \(K\) and their coprimeness.
Section~\ref{sec:reduction} reduces any further factorization to rational points on a fixed genus-one quartic.
In sections~\ref{sec:jac}--\ref{sec:rank} we pass to the Jacobian, compute its rank and torsion, and identify the curve in standard databases (Cremona label \(\mathrm{48a3}\); see \cite{CremonaEC,LMFDB48a3}).
Finally, sections~\ref{sec:CQ}--\ref{sec:final} enumerate \(\mathcal{C}(\mathbb{Q})\) and prove the main irreducibility theorem.

% ===================== Split over K=Q(sqrt(2) =====================

\section{Split over \texorpdfstring{$K=\mathbb{Q}(\sqrt2)$}{K=Q(sqrt2)} and coprimeness}\label{sec:split}
Let $a,u\in\mathbb{Z}$ satisfy $au\neq0$ and $u^2\neq a^2$, and put
\[
\Delta:=u^2-a^2\neq0,\qquad A_0:=a^2u^2>0.
\]
Write
\[
P_{a,u}(t)=t^8+At^6+Bt^4+Ct^2+D,\quad
A=6\Delta,\ B=\Delta^2-2A_0,\ C=-6\Delta A_0,\ D=A_0^2.
\]
Let $K:=\mathbb{Q}(\sqrt2)$.

\begin{lemma}[Explicit $K$-split]\label{lem:split}
In $K[t]$,
\[
P_{a,u}(t)=H_-(t)\,H_+(t),\qquad
H_\pm(t):=t^4+(3\mp2\sqrt2)\Delta\,t^2-A_0.
\]
\end{lemma}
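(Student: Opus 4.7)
The plan is the following. First, I will observe that $P_{a,u}(t)$ lies in $\mathbb{Z}[t^2]$, so the claimed factorization amounts to factoring the quartic
\[
Q(s) := s^4 + 6\Delta\,s^3 + (\Delta^2 - 2A_0)\,s^2 - 6\Delta A_0\,s + A_0^2,
\]
obtained by setting $s=t^2$, as $Q(s)=q_-(s)\,q_+(s)$ in $K[s]$, where $q_\pm(s):=s^2+(3\mp 2\sqrt{2})\Delta\,s - A_0$. Since $H_\pm(t)=q_\pm(t^2)$, this reduces an octic factorization to a much more tractable quartic one.

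Next, I will verify $Q=q_-\,q_+$ by direct expansion. Because $q_-$ and $q_+$ are conjugate under $\operatorname{Gal}(K/\mathbb{Q})$, their product lies automatically in $\mathbb{Q}[s]$, so only the specific coefficients need checking. The computation hinges on two identities in $\mathbb{Z}[\sqrt{2}]$: the trace $(3-2\sqrt{2})+(3+2\sqrt{2})=6$ reproduces the coefficient $6\Delta$ of $s^3$ and, multiplied by $-A_0$, the coefficient $-6\Delta A_0$ of $s$; the norm $(3-2\sqrt{2})(3+2\sqrt{2})=1$ (so $3\pm 2\sqrt{2}$ are units) produces the $\Delta^2$ part of the middle coefficient $\Delta^2-2A_0$, with the two $-A_0$ contributions coming from the cross products with the constant terms. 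The constant term $A_0^2=(-A_0)^2$ is immediate.

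There is no real obstacle here: the lemma is a purely formal polynomial identity, verifiable in a few lines. The only conceptual input is knowing in advance what the factors should be. Heuristically one arrives at them by applying Ferrari's resolvent to $Q(s)$: the associated resolvent cubic splits over $\mathbb{Q}(\sqrt{2})$, which both explains the appearance of $\sqrt{2}$ and determines the coefficients of $q_\pm$ up to the Galois symmetry $\sqrt{2}\mapsto-\sqrt{2}$. The fact that $q_\pm$ are monic in $s$ with constant term $-A_0$ (rather than $+A_0$) is itself forced by the product of constants equaling $A_0^2$ together with the sign of the $s^2$--coefficient, leaving no ambiguity in the guess.
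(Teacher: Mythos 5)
Your proposal is correct and follows essentially the same route as the paper: substitute $s=t^2$ and verify the quartic identity $Q(s)=q_-(s)\,q_+(s)$ by direct expansion, which the trace and norm identities $(3-2\sqrt2)+(3+2\sqrt2)=6$ and $(3-2\sqrt2)(3+2\sqrt2)=1$ confirm coefficient by coefficient. The remarks on Galois conjugacy and Ferrari's resolvent are pleasant motivation but add nothing beyond the paper's one-line computation.
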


\begin{proof}
With $S=t^2$,
\begin{align*}
H_-(t)H_+(t)
&=(S^2+\bigl(3-2\sqrt2\bigr)\Delta S-A_0)\,(S^2+\bigl(3+2\sqrt2\bigr)\Delta S-A_0)\\
&\text{(using $(3-2\sqrt2)+(3+2\sqrt2)=6$ and $(3-2\sqrt2)(3+2\sqrt2)=1$)}\\
&= S^4+A S^3+B S^2+C S+D.
\end{align*}
\end{proof}

\begin{lemma}[Coprimeness]\label{lem:gcd1}
$\gcd(H_-,H_+)=1$ in $K[t]$.
\end{lemma}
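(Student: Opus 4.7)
The plan is to exploit the elementary fact that any common divisor in $K[t]$ of $H_-$ and $H_+$ must divide every $K[t]$-linear combination of them, and in particular must divide the difference $H_-(t)-H_+(t)$. That difference is extremely sparse: the $t^4$ terms and constant terms cancel identically, and the $t^2$ coefficients combine to give
\[
H_-(t)-H_+(t)=\bigl[(3+2\sqrt 2)-(3-2\sqrt 2)\bigr]\Delta\,t^2=4\sqrt 2\,\Delta\,t^2.
\]

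Since $\Delta=u^2-a^2\neq 0$ by the standing hypothesis $a\neq u$, and $4\sqrt 2\in K^{\times}$, the right-hand side is an associate of $t^2$ in $K[t]$. Consequently any monic common divisor of $H_-$ and $H_+$ is a power of the irreducible $t$, hence lies in $\{1,t,t^2\}$. To eliminate the remaining possibilities $t$ and $t^2$, I would evaluate at zero: $H_\pm(0)=-A_0=-a^2u^2$, which is nonzero because $a,u\in\mathbb{Z}_{>0}$. Therefore $t\nmid H_\pm$, and the gcd must be a unit, i.e.\ $\gcd(H_-,H_+)=1$ in $K[t]$.

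There is no serious obstacle here; the argument reduces to two one-line checks, made possible by the very particular shape of $H_\pm$ (which differ only in the middle coefficient). The only point requiring care is the sign convention: recalling that the subscript $\pm$ corresponds to the $\mp$ in $(3\mp 2\sqrt 2)\Delta$, so that the subtraction yields $4\sqrt 2\,\Delta\,t^2$ rather than $0$. Everything else is forced by $\Delta\neq 0$ and $A_0\neq 0$, both of which are immediate from the coprime-integer hypotheses on $a,u$.
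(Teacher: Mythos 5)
Your proof is correct and uses essentially the same mechanism as the paper: subtracting the two factors to isolate $4\sqrt2\,\Delta$ times a square of $t$, then invoking $A_0\neq0$ to rule out the root/factor $t=0$. The only difference is cosmetic — you phrase it as divisibility of $H_--H_+=4\sqrt2\,\Delta\,t^2$ in $K[t]$, while the paper argues via a hypothetical common root $t_0$ and the resultant equation $4\sqrt2\,\Delta\,t_0^2=0$.
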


\begin{proof}
A common root $t_0$ gives $S_0=t_0^2$ satisfying both
\[
S^2+(3-2\sqrt2)\Delta S-A_0=0,\qquad
S^2+(3+2\sqrt2)\Delta S-A_0=0
\]
at $S=S_0$. Subtracting yields
$4\sqrt2\,\Delta\,S_0=0\Rightarrow S_0=0\Rightarrow A_0=0$, impossible.
\end{proof}

% ===================== Reduction =====================

\section{Reduction to a fixed genus-one quartic}\label{sec:reduction}
Put $S:=t^2$ and define
\begin{equation}\label{eq:DeltaS}
h_\pm(S):=S^2+(3\mp2\sqrt2)\Delta\,S-A_0\in K[S],\qquad
D_\pm:=\mathrm{Disc}(h_\pm)=(17\mp12\sqrt2)\Delta^2+4A_0\in K.
\end{equation}
Then $H_\pm(t)=h_\pm(t^2)$. Moreover, $h_\pm$ splits in $K[S]$ if and only if $D_\pm$ is a square in $K$.
We treat $h_-$; the case of $h_+$ follows by applying the conjugation $\sqrt2\mapsto-\sqrt2$.

Assume that $h_-$ splits in $K[S]$. Then $D_-=(r+w\sqrt2)^2$ for some $r,w\in\mathbb{Q}$.
Comparing the rational and $\sqrt2$-parts yields
\begin{equation}\label{eq:rs}
2rw=-12\Delta^2,\qquad r^2+2w^2=17\Delta^2+4A_0.
\end{equation}
Eliminating $r$ and writing $T:=w^2\in\mathbb{Q}_{\ge0}$ we obtain
\begin{equation}\label{eq:Tquad}
2T^2-(17\Delta^2+4A_0)T+36\Delta^4=0.
\end{equation}
Hence its discriminant
\begin{equation}\label{eq:bigD}
\Delta_T:=(17\Delta^2+4A_0)^2-288\Delta^4
=\Delta^4+136\Delta^2A_0+16A_0^2
\end{equation}
must be a rational square; we write $Z\in\mathbb{Q}$ for a square root, so that $Z^2=\Delta_T$.

Introduce
\[
X_0:=\Delta^2,\quad v:=\frac{Z}{X_0},\quad
\tau:=\frac{A_0}{X_0}=\Bigl(\frac{au}{\Delta}\Bigr)^2\in\mathbb{Q}_{>0}.
\]
Dividing \eqref{eq:bigD} by $X_0^2$ yields the conic
\begin{equation}\label{eq:conic}
v^2=16\tau^2+136\tau+1.
\end{equation}
Since $\tau=(au/\Delta)^2$ is a rational square, write $\tau=y^2$, and we arrive at the quartic
\begin{equation}\label{eq:quartic}
\mathcal{C}:\quad v^2=16y^4+136y^2+1.
\end{equation}
As shown in lemma~\ref{lem:genus1}, the smooth projective model $\overline{\mathcal{C}}$ is a smooth curve of genus $1$.

\section{The Jacobian of \texorpdfstring{$\mathcal{C}$}{C} and a convenient elliptic model}\label{sec:jac}
Let $F(Y)=AY^4+BY^3+CY^2+DY+E$ with $(A,B,C,D,E)=(16,0,136,0,1)$.
The classical invariants of a binary quartic (see, e.g., Cassels~\cite{Cassels} or Silverman~\cite{SilvermanAEC}) are
\[
I=12AE-3BD+C^2=18688.
\]
\[
J=72ACE+9BCD-27AD^2-27EB^2-2C^3=-4\,874\,240.
\]
A Weierstrass model of $\mathrm{Jac}(\mathcal{C})$ is
\begin{equation}\label{eq:E0}
E_0:\ Y^2=X^3-27\,I\,X-27\,J
=Y^2=X^3-504\,576\,X+131\,604\,480.
\end{equation}
Its $j$-invariant equals $j(E_0)=\frac{1\,556\,068}{81}$.

For arithmetic convenience we work on an equivalent model
\begin{equation}\label{eq:E}
E:\quad Y^2=X(X-8)(X-9)=X^3-17X^2+72X.
\end{equation}
In the Cremona database this curve belongs to isogeny class $48\mathrm{a}$ with label $48\mathrm{a}3$; see~\cite{CremonaEC,LMFDB48a3}.

This curve has the same $j$ and three rational $2$-torsion points $(0,0),(8,0),(9,0)$.
Since $\overline{\mathcal{C}}$ has a rational point $(y,v)=(0,1)$, it is an elliptic curve; choosing this point as the origin identifies $\overline{\mathcal{C}}$ with its Jacobian over $\mathbb{Q}$ (see \cite{Cassels}).
Proposition~\ref{prop:CEiso} below gives an explicit isomorphism $\overline{\mathcal{C}}\simeq E$, and we henceforth view $E$ as a convenient Weierstrass model of $\mathrm{Jac}(\mathcal{C})$ for computations.

\begin{remark}[Explicit isomorphism $E_0\simeq E$]
On $E_0:\ y_0^2=x_0^3-504576\,x_0+131604480$ set
\[
(X,Y)=\bigl((x_0+816)/12^{2},\ y_0/12^{3}\bigr)
\]
(equivalently, \(x_0=12^{2}X-816,\ y_0=12^{3}Y\)).
A direct substitution yields \(Y^{2}=X^{3}-17X^{2}+72X\).
Moreover \(\Delta(E_0)=12^{12}\Delta(E)\) and \(j(E_0)=j(E)\).
\end{remark}

\begin{remark}[Torsion on $E$]\label{rem:tors}
For $E$ in \eqref{eq:E}, the three points $(0,0),(8,0),(9,0)$ are rational $2$-torsion.
Moreover, the points $(6,\pm 6)$ and $(12,\pm 12)$ have order $4$.
Indeed, for $E:\ Y^2=X^3-17X^2+72X$ the doubling slope is
\[
m=\frac{3X^2-34X+72}{2Y},
\]
and the duplication formulas are
\[
X(2P)=m^2+17-2X,\qquad Y(2P)=-Y+m\bigl(X-X(2P)\bigr).
\]
For $P=(6,6)$ one gets $m=-2$ and hence $2P=(9,0)$; for $P=(12,12)$ one gets $m=4$ and hence $2P=(9,0)$.
Since $(9,0)$ is a nontrivial $2$-torsion point, these points have order $4$.
Thus $E(\mathbb{Q})$ contains a subgroup isomorphic to $\mathbb{Z}/2\mathbb{Z}\oplus\mathbb{Z}/4\mathbb{Z}$; proposition~\ref{prop:rank0} shows this is the full torsion subgroup.
\end{remark}

\section{Rank and torsion of $E$ (via computation)}\label{sec:rank}
\begin{proposition}[Rank $0$ and torsion]\label{prop:rank0}
For the curve $E/\mathbb{Q}$ given by \eqref{eq:E}, one has
\[
\mathrm{rank}\,E(\mathbb{Q})=0,\qquad
E(\mathbb{Q})_{\mathrm{tors}}\cong \mathbb{Z}/2\mathbb{Z}\oplus\mathbb{Z}/4\mathbb{Z},\qquad
\mathrm{Cond}(E)=48.
\]
\end{proposition}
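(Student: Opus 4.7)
The plan is to establish the three assertions---torsion, conductor, and rank---separately, mirroring what a computer algebra system carries out internally.

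For the torsion, the eight points from Remark~\ref{rem:tors} form a subgroup $G\subset E(\mathbb{Q})_{\mathrm{tors}}$ of order $8$ isomorphic to $\mathbb{Z}/2\mathbb{Z}\oplus\mathbb{Z}/4\mathbb{Z}$: the point $(6,6)$ has order $4$ since $2(6,6)=(9,0)\ne O$, and $(8,0)$ is a $2$-torsion point outside $\langle(6,6)\rangle$. To bound $E(\mathbb{Q})_{\mathrm{tors}}$ from above, I would invoke the standard injection $E(\mathbb{Q})_{\mathrm{tors}}\hookrightarrow E(\mathbb{F}_p)$ at primes of good reduction ($p\notin\{2,3\}$); a direct point count gives $\#E(\mathbb{F}_5)=8$, which already matches $|G|$, forcing $E(\mathbb{Q})_{\mathrm{tors}}=G$.

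For the conductor, compute $c_4=1168=2^4\cdot 73$ and $\Delta(E)=82944=2^{10}\cdot 3^{4}$, so only $p\in\{2,3\}$ can contribute. At $p=3$, the reduction $Y^{2}\equiv X^{2}(X+1)\pmod{3}$ exhibits a node with split rational tangents $Y=\pm X$, giving Kodaira type $I_4$ and local exponent $1$. At $p=2$ the given Weierstrass model is already minimal (since $v_{2}(\Delta(E))=10<12$) and the reduction is additive; running Tate's algorithm pins down the Kodaira type with conductor exponent $4$ (tame part $2$ plus wild part $2$). The product is $2^{4}\cdot 3=48$.

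The substantive work is the rank bound. Since all of $E[2]$ is rational, I would perform a full $2$-descent via the injection
\[
E(\mathbb{Q})/2E(\mathbb{Q})\ \hookrightarrow\ \bigl(\mathbb{Q}^{\times}/\mathbb{Q}^{\times 2}\bigr)^{2},\qquad (x,y)\longmapsto (x,\,x-8),
\]
whose image has $\mathbb{F}_{2}$-dimension $r+2$. Unramifiedness outside the primes dividing $\Delta(E)$ confines the image to $\langle -1,2,3\rangle^{2}$, a group of order $64$. For each class $(d_{1},d_{2})$ in this group I would attach the standard homogeneous space (the pair of conics $d_{1}u^{2}-d_{2}v^{2}=8$, $d_{1}u^{2}-d_{1}d_{2}w^{2}=9$) and test local solvability over $\mathbb{R}$, $\mathbb{Q}_{3}$, and $\mathbb{Q}_{2}$. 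A short calculation shows the four torsion classes already provide a Selmer subgroup of order $4$, so ruling out every other class as everywhere locally solvable would yield $|\mathrm{Sel}_{2}(E/\mathbb{Q})|=4$, hence $\mathrm{rank}\,E(\mathbb{Q})=0$ (and incidentally $\mathrm{Sha}(E/\mathbb{Q})[2]=0$).

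The hard part will be the $2$-adic local analysis: for each class surviving the real and $3$-adic conditions, $\mathbb{Q}_{2}$-solvability must be decided via Hensel lifts modulo a sufficiently high power of $2$. This is the tedious but mechanical step the authors delegate to Magma; there is no conceptual difficulty, only case-by-case bookkeeping across the at most sixteen surviving candidates.
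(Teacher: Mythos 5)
Your strategy is genuinely different from the paper's: the paper's entire proof of this proposition is a citation of a Magma session (conductor, torsion, and rank are all read off from the transcript in the Appendix), whereas you outline a hand verification. Your torsion argument is complete and correct: the points of Remark~\ref{rem:tors} generate a subgroup of order $8$ isomorphic to $\mathbb{Z}/2\mathbb{Z}\oplus\mathbb{Z}/4\mathbb{Z}$, the prime $5$ is of good reduction ($\Delta(E)=2^{10}\cdot3^4$), one checks $\#E(\mathbb{F}_5)=8$, and the reduction injection forces equality. This is a genuine improvement over the paper, which offers no independent verification of the torsion beyond Remark~\ref{rem:tors} and the Magma output. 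Your descent setup is also correct: $E[2]\subset E(\mathbb{Q})$, the map $(x,y)\mapsto(x,x-8)$ has image of order $2^{r+2}$ confined to $\langle-1,2,3\rangle^2$, and the image of the torsion classes $\{(1,1),(2,-2),(6,-2),(3,1)\}$ is indeed a subgroup of order $4$.

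However, the rank assertion --- the one fact the entire paper hinges on --- is not actually proved in your write-up. Showing $\operatorname{rank}E(\mathbb{Q})=0$ requires exhibiting, for each of the (up to) $60$ nontrivial classes $(d_1,d_2)$ outside the torsion image, a concrete local obstruction to the pair of conics $d_1u^2-d_2v^2=8$, $d_1u^2-d_1d_2w^2=9$; you describe this step but defer it as ``tedious but mechanical bookkeeping.'' Until those real, $3$-adic, and $2$-adic computations are carried out (or at least a representative sample with a uniform argument covering the rest), the bound $|\mathrm{Sel}_2(E/\mathbb{Q})|\le 4$ is unestablished and the proposal is a plan rather than a proof --- epistemically no stronger than the paper's appeal to Magma, though far more transparent. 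The same caveat applies, less critically, to the conductor exponent at $2$: you assert $f_2=4$ but do not run Tate's algorithm or invoke Ogg's formula ($f_2=v_2(\Delta)+1-m_2$ with $m_2=7$ for type $I_2^*$) to justify it. A pragmatic alternative that would close both gaps without the case analysis is to identify $E$ with the curve 48a3 in Cremona's tables (as the Magma transcript does) and cite the tabulated rank and conductor.
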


\begin{proof}[Computational proof]
A Magma session (see Appendix)~\cite{Magma} on the model
$E: Y^2 = X(X-8)(X-9)$ returns
\[
\mathrm{Cond}(E)=48,\qquad
E(\mathbb{Q})_{\mathrm{tors}}\cong \mathbb{Z}/2\mathbb{Z}\oplus\mathbb{Z}/4\mathbb{Z},\qquad
\mathrm{rank}\,E(\mathbb{Q})=0.
\]
Moreover, \texttt{MinimalModel(E)} has Cremona label $48\mathrm{a}3$ (Appendix), and the same invariants
(conductor $48$, rank $0$, torsion $\mathbb{Z}/2\oplus\mathbb{Z}/4$) are recorded in Cremona's tables and in the LMFDB entry for $48\mathrm{a}3$; see \cite{CremonaEC,LMFDB48a3}.

Finally, \texttt{IntegralPoints(E)} returns representatives
\[
(0,0),\ (8,0),\ (9,0),\ (6,-6),\ (12,12),
\]
and adding their inverses (negating the $Y$-coordinate) gives the full list of integral points
\[
(0,0),\ (8,0),\ (9,0),\ (6,\pm6),\ (12,\pm12).
\]
Together with remark~\ref{rem:tors}, these generate $E(\mathbb{Q})_{\mathrm{tors}}$, and since the rank is $0$ this proves the proposition.
\end{proof}

% ===================== Rational points on C =====================

\section{Rational points on \texorpdfstring{$\mathcal{C}$}{C}}\label{sec:CQ}
Let $\overline{\mathcal{C}}$ be the projective closure of $\mathcal{C}$ in the weighted projective space $\mathbb{P}(1,2,1)$ with coordinates $(y:v:z)$ of weights $1,2,1$:
\[
\overline{\mathcal{C}}:\quad v^2=16y^4+136y^2z^2+z^4.
\]
Since the leading and trailing coefficients are squares, $\overline{\mathcal{C}}$ has two rational points at infinity,
\[
P_\infty^\pm:=(1:\pm4:0),
\]
corresponding to the branches $v=\pm4y^2$ at $z=0$.

\begin{lemma}[Smoothness and genus]\label{lem:genus1}
The curve $\overline{\mathcal{C}}$ is smooth and has genus $1$.
\end{lemma}

\begin{proof}
On the affine chart $z=1$ we have $\mathcal{C}: v^2=F(y)$ with $F(y)=16y^4+136y^2+1$.
A singular affine point would satisfy $F(y)=0$ and $F'(y)=0$.
But
\[
F'(y)=64y^3+272y=16y(4y^2+17),
\]
and $F(0)=1\neq0$, while $F(y)=-288\neq0$ whenever $4y^2+17=0$.
Hence $\gcd(F,F')=1$ and the affine chart is nonsingular. At $z=0$ we have $v^2=16y^4$, so the two points $P_\infty^\pm$ are also nonsingular.
Therefore $\overline{\mathcal{C}}$ is smooth.

Since $\deg F=4$, the hyperelliptic genus formula gives
$g=\lfloor(\deg F-1)/2\rfloor=1$ for a smooth curve $v^2=F(y)$.
\end{proof}

\begin{proposition}[Explicit isomorphism $\overline{\mathcal{C}}\simeq E$]\label{prop:CEiso}
Define a rational map $\phi:\mathcal{C}\dashrightarrow E$ on the affine chart by
\begin{equation}\label{eq:phi}
\phi(y,v)=(X,Y),\qquad
X=\frac{v+4y^2+17}{2},\qquad
Y=y\,(v+4y^2+17).
\end{equation}
Then $(X,Y)$ satisfies $E:\ Y^2=X(X-8)(X-9)$. Moreover, $\phi$ extends to an isomorphism of smooth projective curves $\overline{\mathcal{C}}\to E$ over $\mathbb{Q}$ sending
\[
P_\infty^+=(1:4:0)\longmapsto O,\qquad
P_\infty^-=(1:-4:0)\longmapsto (0,0).
\]
The inverse map on affine points with $X\neq0$ is
\begin{equation}\label{eq:phi_inv}
y=\frac{Y}{2X},\qquad v=X-\frac{72}{X}.
\end{equation}
\end{proposition}

\begin{proof}
Set $U:=v+4y^2+17$, so that $X=U/2$ and $Y=yU$.
Then
\[
X-8=\frac{v+4y^2+1}{2},\qquad X-9=\frac{v+4y^2-1}{2},
\]
and therefore
\begin{align*}
X(X-8)(X-9)
&=\frac18\,(v+4y^2+17)(v+4y^2+1)(v+4y^2-1)\\
&=\frac18\,U\bigl((v+4y^2)^2-1\bigr).
\end{align*}
Using $v^2=16y^4+136y^2+1$ we compute
\[
(v+4y^2)^2-1=v^2+8y^2v+16y^4-1
=8y^2(v+4y^2+17)=8y^2U,
\]
hence $X(X-8)(X-9)=\frac18\,U\cdot(8y^2U)=y^2U^2=(yU)^2=Y^2$.

From \eqref{eq:phi} we have $U=2X$ and $Y=2Xy$, so $y=Y/(2X)$ for $X\neq0$.
Moreover
\[
v=U-4y^2-17=2X-17-\frac{Y^2}{X^2}.
\]
Using the equation of $E$ we get $Y^2/X^2=X-17+72/X$, hence $v=X-72/X$.
This proves \eqref{eq:phi_inv} and shows that $\phi$ is birational.

Finally, near $P_\infty^+$ one has $v\sim 4y^2$, hence $U\sim 8y^2$ and $X\to\infty$, so $\phi(P_\infty^+)=O$.
Near $P_\infty^-$ one has $v\sim -4y^2-17$, hence $U\to0$ and $(X,Y)\to(0,0)$, so $\phi(P_\infty^-)=(0,0)$.
Since both curves are smooth projective curves (lemma~\ref{lem:genus1}), any birational map between them is an isomorphism; see, e.g., \cite[Ch.~I, \S6]{HartshorneAG}. Hence $\phi$ extends to an isomorphism.
\end{proof}

\begin{proposition}\label{prop:Cpoints}
The set $\overline{\mathcal{C}}(\mathbb{Q})$ consists of exactly $8$ points: the two points at infinity $P_\infty^\pm$ and the six affine points
\[
(y,v)=(0,\pm1),\qquad \Bigl(\pm\tfrac12,\ \pm6\Bigr).
\]
\end{proposition}

\begin{proof}
By proposition~\ref{prop:rank0} and remark~\ref{rem:tors}, the Mordell--Weil group is
$E(\mathbb{Q})=E(\mathbb{Q})_{\mathrm{tors}}$ of order $8$, and its rational points are
\[
E(\mathbb{Q})=\{O,\ (0,0),\ (8,0),\ (9,0),\ (6,\pm6),\ (12,\pm12)\}.
\]
Via the explicit isomorphism $\overline{\mathcal{C}}\simeq E$ from proposition~\ref{prop:CEiso} we obtain:
\[
O\longleftrightarrow P_\infty^+,\qquad (0,0)\longleftrightarrow P_\infty^-,
\]
and for $X\neq0$ the inverse formulas \eqref{eq:phi_inv} give
\[
(8,0)\mapsto (0,-1),\quad (9,0)\mapsto(0,1),\quad
(6,\pm6)\mapsto\bigl(\pm\tfrac12,-6\bigr),\quad
(12,\pm12)\mapsto\bigl(\pm\tfrac12,6\bigr).
\]
Thus $\overline{\mathcal{C}}(\mathbb{Q})$ contains precisely the $8$ points listed.
\end{proof}

\begin{corollary}\label{cor:taus}
The only rational values of $\tau=y^2$ arising from $\mathcal{C}(\mathbb{Q})$ are
\[
\tau\in\{0,\ 1/4\}.
\]
\end{corollary}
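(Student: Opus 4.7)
The plan is to read off the admissible values of $\tau$ directly from the complete enumeration of rational points furnished by \Cref{prop:Cpoints}, matching them against the structural constraint $\tau = y^2 = (au/\Delta)^2$ inherited from \Cref{sec:reduction}. Essentially all the work has already been done: \Cref{prop:rank0} pins down $E(\mathbb{Q})$ as an $8$-element torsion group, and the birational identification $\overline{\mathcal{C}} \simeq E$ transports this count to $|\overline{\mathcal{C}}(\mathbb{Q})| = 8$ with an explicit list of points.

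First I would recall the full list of rational points on $\overline{\mathcal{C}}$ from \Cref{prop:Cpoints}: the two points at infinity, together with the six affine points $(y,v) = (0,\pm 1)$ and $(\pm 1/2, \pm 6)$. Since $\tau = y^2$ depends only on $y$, the four choices of sign on $v$ collapse onto the two $y$-values $0$ and $\pm 1/2$.

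Second, I would argue that the two points at infinity must be discarded. By construction $\tau = A_0/\Delta^2$ is a finite positive rational, so only affine rational points of $\mathcal{C}$ can contribute a rational value of $\tau$. Concretely, the two points at infinity on $\overline{\mathcal{C}}$ correspond to the branches $v \sim \pm 4 y^2$ as $y \to \infty$, and carry no finite $y$-coordinate.

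Third, I would simply compute $y^2$ at each surviving affine point: $(0,\pm 1)$ gives $\tau = 0$, and $(\pm 1/2, \pm 6)$ gives $\tau = 1/4$. Since these exhaust $\overline{\mathcal{C}}(\mathbb{Q})$ modulo the two excluded points at infinity, the only possible rational values are $\tau \in \{0, 1/4\}$, as claimed. I do not anticipate a genuine obstacle; the only subtlety worth spelling out is the explicit justification that no contribution comes from the infinite place, and this is immediate from the definition $\tau = y^2 \in \mathbb{Q}$. All real arithmetic content has already been absorbed into \Cref{prop:rank0} and \Cref{prop:Cpoints}.
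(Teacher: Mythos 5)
Your proposal is correct and follows exactly the route the paper intends: the corollary is stated without proof precisely because it amounts to reading off $y^2$ from the six affine points listed in \Cref{prop:Cpoints} (the points at infinity carrying no finite $y$). Your explicit remark that the infinite places contribute nothing is a small but welcome clarification of what the paper leaves implicit.
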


% ===================== Excluding the values of Tau =====================

\section{Excluding the values of \texorpdfstring{$\tau$}{tau}}\label{sec:tau}
Recall $\tau=(au/\Delta)^2$ with $\Delta=u^2-a^2\neq0$ and $au\neq0$.

\begin{lemma}\label{lem:tau}
For rational numbers $a,u$ with $au\neq0$ and $u^2\neq a^2$ one has $\tau\notin\{0,1/4\}$.
\end{lemma}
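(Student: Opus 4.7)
The plan is to translate each of the two forbidden values of $\tau$ into an elementary Diophantine condition on the positive coprime integers $a,u$ and derive a direct contradiction in each case.

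For $\tau=0$ the argument is immediate: $\tau=(au/\Delta)^2=0$ forces $au=0$, which is incompatible with $a,u\in\mathbb{Z}_{>0}$. So all the real work is the case $\tau=1/4$.

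For $\tau=1/4$ I would proceed as follows. The equation $(au/\Delta)^2=1/4$ is equivalent to $4a^2u^2=\Delta^2=(u^2-a^2)^2$, i.e.\ $|u^2-a^2|=2au$. Splitting on the sign, the two possibilities are
\[
u^2-2au-a^2=0 \quad\text{or}\quad u^2+2au-a^2=0,
\]
and completing the square rewrites these as
\[
(u-a)^2=2a^2 \quad\text{or}\quad (u+a)^2=2a^2.
\]
Either equation implies $\bigl((u\mp a)/a\bigr)^2=2$, i.e.\ $\sqrt{2}\in\mathbb{Q}$, a contradiction. (Equivalently, solving the quadratic in $u/a$ gives $u/a\in\{1\pm\sqrt{2},\,-1\pm\sqrt{2}\}$, none rational.) Note that coprimality of $a,u$ is not even required for this step; positivity plus $a\neq u$ suffices to rule out the trivial collapse $a=u=0$.

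I do not expect any genuine obstacle: the structural constraint $\tau=(au/\Delta)^2$ was already imposed in the reduction of Section~\ref{sec:reduction}, so once Corollary~\ref{cor:taus} has pinned $\tau$ down to $\{0,1/4\}$, the exclusion reduces to an irrationality argument for $\sqrt{2}$ applied to a Pell-type relation. The only thing worth flagging is the sign case analysis on $\Delta=u^2-a^2$ (which may be positive or negative since we have not ordered $a$ and $u$); treating both signs at once via $|u^2-a^2|=2au$ avoids any case split and delivers the contradiction in one line.
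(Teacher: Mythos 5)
Your proposal is correct and follows essentially the same route as the paper: $\tau=0$ is dismissed by $au\neq 0$, and $\tau=1/4$ is reduced to $|u^2-a^2|=2au$, whose two sign cases each force $\sqrt{2}\in\mathbb{Q}$ after completing the square. The only cosmetic difference is which variable you complete the square in for the second case ($(u+a)^2=2a^2$ versus the paper's $(a-u)^2=2u^2$), which changes nothing.
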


\begin{proof}
If $\tau=0$, then $au=0$, contradicting the hypotheses.
If $\tau=1/4$, then $(au/\Delta)^2=1/4$, hence $\Delta=\pm 2au$.
Writing $x=u/a\in\mathbb{Q}$, this gives $x^2-1=\pm 2x$, i.e.
\[
x^2-2x-1=0\quad\text{or}\quad x^2+2x-1=0,
\]
whose solutions are $x=1\pm\sqrt2$ or $x=-1\pm\sqrt2$, none of which is rational.
\end{proof}

\begin{lemma}[Even-factor criterion over totally real fields]\label{lem:even}
Let $K$ be a totally real field and let $A_0\in K$ with $A_0>0$.
For any $\alpha\in K$, every factorization in $K[t]$ of the even quartic
\[
Q(t)=t^4+\alpha t^2-A_0
\]
is even in $t$. Equivalently, $Q$ is reducible in $K[t]$ if and only if the quadratic
\[
q(S)=S^2+\alpha S-A_0\in K[S]\qquad(S=t^2)
\]
is reducible in $K[S]$.
\end{lemma}

\begin{proof}
Suppose $Q(t)$ is reducible in $K[t]$. Since $Q$ is monic of degree $4$, it admits a factorization
\[
Q(t)=(t^2+pt+q)(t^2-p t+r)
\]
with $p,q,r\in K$. Comparing the coefficients of $t$ gives $(r-q)p=0$.
If $p\neq 0$ then $r=q$, and comparing constant terms yields $q^2=-A_0$.
This is impossible in a totally real field when $A_0>0$ (since then $-A_0<0$ in every real embedding).
Hence $p=0$, and the factorization is even:
\[
Q(t)=(t^2+q)(t^2+r).
\]
Writing $S=t^2$ shows that this is equivalent to a factorization of $q(S)=S^2+\alpha S-A_0$ in $K[S]$.
\end{proof}

\begin{corollary}[No $K$-split of $h_\pm$]\label{cor:noKsplit}
Under the standing hypotheses, neither $D_-$ nor $D_+$ in \eqref{eq:DeltaS} is a square in $K=\mathbb{Q}(\sqrt2)$. Hence $h_-(S)$ and $h_+(S)$ do not split in $K[S]$, and each $H_\pm(t)$ is irreducible in $K[t]$.
\end{corollary}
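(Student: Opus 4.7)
The strategy is simply to chain the material already assembled. I would assume for contradiction that $h_-(S)$ splits in $K[S]$. Then $\Delta_S=(17-12\sqrt 2)\Delta^2+4A_0$ is a square in $K$, say $\Delta_S=(r+s\sqrt 2)^2$ with $r,s\in\mathbb{Q}$. Equating rational and $\sqrt 2$ parts recovers \eqref{eq:rs}; since $\Delta\neq 0$ forces $s\neq 0$, eliminating $r=-6\Delta^2/s$ turns the system into the rational quadratic \eqref{eq:Tquad} in $T=s^2$, and rationality of $T$ forces its discriminant $Z^2$ in \eqref{eq:bigD} to be a rational square. Dividing by $X^2=\Delta^4$ and using the structural constraint $\tau:=A_0/\Delta^2=(au/\Delta)^2$, which is automatically a rational square $y^2$ with $y=au/\Delta\in\mathbb{Q}$, produces a rational point $(y,v)\in\mathcal{C}(\mathbb{Q})$ on the fixed quartic \eqref{eq:quartic}. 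By \Cref{cor:taus} the corresponding $\tau$ must lie in $\{0,1/4\}$, and \Cref{lem:tau} rules out both values under the standing hypotheses on $a,u$. Contradiction; hence $h_-$ does not split in $K[S]$.

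For $h_+$, I would invoke the nontrivial Galois automorphism $\sigma\in\mathrm{Gal}(K/\mathbb{Q})$, $\sqrt 2\mapsto-\sqrt 2$. Since $\sigma$ fixes $\Delta,A_0\in\mathbb{Q}$ but swaps $(3-2\sqrt 2)\Delta\leftrightarrow(3+2\sqrt 2)\Delta$, one has $\sigma(h_+)=h_-$; a $K[S]$--factorization of $h_+$ would then produce one of $h_-$, already excluded.

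To lift non-splitting of $h_\pm(S)$ to irreducibility of $H_\pm(t)=t^4+(3\mp 2\sqrt 2)\Delta\,t^2-A_0$ in $K[t]$, I would use \Cref{rem:even}: since $K$ is totally real and $-A_0<0$, every nontrivial $K$--factorization of $H_\pm$ must be even in $t$. Linear and cubic even factors are automatically ruled out because $H_\pm(0)=-A_0\neq 0$ (any such factor would have to vanish at $0$), so the only option is $H_\pm=(t^2-S_1)(t^2-S_2)$ with $S_1,S_2\in K$ the two roots of $h_\pm$ -- precisely a split of $h_\pm$ in $K[S]$, already excluded. Therefore $H_\pm(t)$ is irreducible in $K[t]$.

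The main obstacle has, in effect, already been surmounted in the preceding sections: the rank-zero computation for $E$, the identification $\overline{\mathcal{C}}(\mathbb{Q})\simeq E(\mathbb{Q})$, and the explicit enumeration of $\mathcal{C}(\mathbb{Q})$, followed by the elementary arithmetic exclusion of $\tau\in\{0,1/4\}$. What remains in this corollary is essentially bookkeeping; the only subtlety is ensuring the even-factor reduction is airtight, which is why I would explicitly handle odd-degree even factors by the remark $H_\pm(0)\neq 0$ before quoting \Cref{rem:even}.
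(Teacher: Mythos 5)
Your proposal is correct and follows essentially the same route as the paper: assume a split, trace the algebra of Section~\ref{sec:reduction} to a rational point on $\mathcal{C}$ with $\tau=y^2=(au/\Delta)^2$, contradict \Cref{cor:taus} and \Cref{lem:tau}, then use \Cref{rem:even} to lift non-splitting of $h_\pm(S)$ to irreducibility of $H_\pm(t)$. Your two small embellishments --- the Galois conjugation $\sigma(h_+)=h_-$ in place of the paper's ``same argument applies,'' and the explicit disposal of a hypothetical linear factor (better phrased as: a root $t_0\in K$ of the even quartic pairs with $-t_0$ to give the quadratic factor $t^2-t_0^2$, and $t_0=0$ is excluded by $H_\pm(0)=-A_0\neq0$, so this case also forces $h_\pm$ to split) --- are sound refinements, not a different approach.
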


\begin{proof}
Assume for contradiction that $D_-$ is a square in $K$ (equivalently, that $h_-(S)$ splits in $K[S]$). Then, by the algebra of section~\ref{sec:reduction}, there exist $v,\tau\in\mathbb{Q}$ with $v^2=16\tau^2+136\tau+1$ and $\tau=(au/\Delta)^2=y^2$, i.e.\ $\mathcal{C}$ has a rational point with $y^2=\tau$. By corollary~\ref{cor:taus} and lemma~\ref{lem:tau} this is impossible. Applying the conjugation $\sqrt2\mapsto-\sqrt2$ gives the same conclusion for $D_+$ and hence for $h_+(S)$.

Finally, by lemma~\ref{lem:even}, any factorization of $H_\pm(t)$ in $K[t]$ would be even in $t$ and would force a splitting of $h_\pm(S)$ in $K[S]$, which we have just excluded. Therefore each $H_\pm$ is irreducible in $K[t]$.
\end{proof}

% ===================== Irreducibility =====================

\section{Irreducibility of $P_{a,u}(t)$ over $\mathbb{Z}$}\label{sec:final}

\begin{lemma}[Quadratic Galois descent]\label{lem:descent}
Let $K/\mathbb{Q}$ be a quadratic extension with nontrivial automorphism $\sigma$.
Let $P(t)\in\mathbb{Q}[t]$ and suppose that in $K[t]$ one has
\[
P(t)=F(t)\,G(t),\qquad G(t)=\sigma(F(t)),
\]
where $F,G\in K[t]$ are irreducible and $\gcd(F,G)=1$ in $K[t]$.
Then $P(t)$ is irreducible in $\mathbb{Q}[t]$.
\end{lemma}

\begin{proof}
Assume $P=AB$ with $A,B\in\mathbb{Q}[t]$ nonconstant.
Viewed in $K[t]$, the polynomial $A$ factors into irreducibles, each of which must be associate to either $F$ or $G$ because $F$ and $G$ are coprime and $FG=P$.
If $F\mid A$ in $K[t]$, then applying $\sigma$ gives $G=\sigma(F)\mid\sigma(A)=A$, hence $FG\mid A$, so $\deg A\ge\deg P$, a contradiction.
Thus neither $F$ nor $G$ divides $A$, and similarly neither divides $B$, impossible since $AB=FG$.
Therefore $P$ is irreducible in $\mathbb{Q}[t]$.
\end{proof}

\begin{theorem}\label{thm:main}
For any integers $a,u$ with $au\neq0$ and $u^2\neq a^2$, the polynomial $P_{a,u}(t)\in\mathbb{Z}[t]$ is irreducible.
\end{theorem}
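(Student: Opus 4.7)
The plan is to deduce $\mathbb{Z}$-irreducibility of $P_{a,u}$ from the $K$-irreducibility of $H_\pm$ already established in \Cref{cor:noKsplit}, by combining unique factorization in $K[t]$, Galois descent from $K=\mathbb{Q}(\sqrt{2})$ to $\mathbb{Q}$, and Gauss's lemma. The substantive arithmetic-geometric work --- the reduction to $\mathcal{C}$, the rank zero of $E$, and the exclusion of $\tau\in\{0,1/4\}$ --- has all been done in the preceding sections, so what remains should be a short formal deduction.

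First I would reduce to $\mathbb{Q}$-irreducibility: since $P_{a,u}\in\mathbb{Z}[t]$ is monic, Gauss's lemma makes $\mathbb{Z}[t]$-irreducibility equivalent to $\mathbb{Q}[t]$-irreducibility. I would then argue by contradiction: suppose $P_{a,u}=f\cdot g$ with $f,g\in\mathbb{Q}[t]$ both monic of positive degree. Viewed in $K[t]$, by \Cref{lem:split}, \Cref{lem:gcd1}, and \Cref{cor:noKsplit} the polynomials $H_-,H_+$ are coprime monic irreducibles whose product is $P_{a,u}$, so unique factorization in $K[t]$ forces the monic divisor $f$ to be a subset-product of $\{H_-,H_+\}$. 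The only nontrivial possibility (since $0<\deg f<8$) is $f\in\{H_-,H_+\}$, of degree $4$.

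To close, I would apply the nontrivial Galois automorphism $\sigma\in\mathrm{Gal}(K/\mathbb{Q})$, $\sqrt{2}\mapsto-\sqrt{2}$. From \Cref{lem:split} one reads off $\sigma(H_\mp)=H_\pm$, and $H_-\neq H_+$ because their $t^2$-coefficients $(3\mp2\sqrt{2})\Delta$ are distinct for $\Delta\neq0$. Hence $\sigma(f)\neq f$, contradicting $f\in\mathbb{Q}[t]$. Therefore no proper $\mathbb{Q}[t]$-factorization of $P_{a,u}$ exists, and $P_{a,u}$ is irreducible in $\mathbb{Z}[t]$ by Gauss's lemma.

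The main obstacle has already been overcome: it is the $K$-irreducibility of $H_\pm$ in \Cref{cor:noKsplit}, which rests on the rank/torsion computation for $E$ in \Cref{prop:rank0} and the elementary exclusion of $\tau\in\{0,1/4\}$ in \Cref{lem:tau}. Given those inputs, the descent to $\mathbb{Z}$-irreducibility is purely formal, requiring nothing beyond unique factorization in $K[t]$, a single Galois involution, and Gauss's lemma.
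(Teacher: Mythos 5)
Your proposal is correct and follows essentially the same route as the paper: both deduce $\mathbb{Q}$-irreducibility from the coprime $K$-irreducible factorization $P_{a,u}=H_-H_+$ of \Cref{lem:split}, \Cref{lem:gcd1}, and \Cref{cor:noKsplit}, then descend to $\mathbb{Z}[t]$ by Gauss's lemma. The only difference is cosmetic: you justify $H_\pm\notin\mathbb{Q}[t]$ explicitly via the Galois involution $\sqrt{2}\mapsto-\sqrt{2}$ and the irrationality of $(3\mp2\sqrt{2})\Delta$, where the paper simply asserts it.
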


\begin{proof}
Let $K=\mathbb{Q}(\sqrt2)$ and let $\sigma$ be the conjugation $\sqrt2\mapsto-\sqrt2$.
By lemmas~\ref{lem:split} and~\ref{lem:gcd1} we have in $K[t]$ the factorization
\[
P_{a,u}(t)=H_-(t)\,H_+(t),\qquad H_+(t)=\sigma(H_-(t)),\qquad \gcd(H_-,H_+)=1.
\]
By corollary~\ref{cor:noKsplit}, both $H_-$ and $H_+$ are irreducible in $K[t]$.
Therefore lemma~\ref{lem:descent} applies and shows that $P_{a,u}(t)$ is irreducible in $\mathbb{Q}[t]$.
Finally, $P_{a,u}(t)$ is primitive in $\mathbb{Z}[t]$, so Gauss's lemma \cite{LangAlgebra} yields irreducibility in $\mathbb{Z}[t]$.
\end{proof}

% ===================== Appendix =====================

\appendix
\section*{Appendix: Magma verification for Sections~\ref{sec:rank} and~\ref{sec:CQ}}\label{app:magma}

\noindent\textbf{Code.}
\begin{code}
Q := Rationals();
E := EllipticCurve([0,-17,0,72,0]); // Y^2 = X^3 - 17 X^2 + 72 X
Emin, mp := MinimalModel(E); Emin;
CremonaReference(Emin);
Conductor(E);
T := TorsionSubgroup(E); T; AbelianInvariants(T);
Rank(E);
IntegralPoints(E);

P<x> := PolynomialRing(Q);
C := HyperellipticCurve(16*x^4 + 136*x^2 + 1);
EfromC, phi := EllipticCurve(C);
IsIsomorphic(EfromC, E);
RationalPoints(C : Bound := 1000);
\end{code}

\noindent\textbf{Transcript.}
\begin{term}
Elliptic Curve defined by y^2 = x^3 + x^2 - 24*x + 36 over Rational Field
48a3
48
Abelian Group isomorphic to Z/2 + Z/4
Defined on 2 generators
Relations:
    2*T.1 = 0
    4*T.2 = 0
[ 2, 4 ]
0 true
[ (0 : 0 : 1), (6 : -6 : 1), (8 : 0 : 1), (9 : 0 : 1), (12 : 12 : 1) ]
[ <(0 : 0 : 1), 1>, <(6 : -6 : 1), 1>, <(8 : 0 : 1), 1>, <(9 : 0 : 1), 1>, <(12
: 12 : 1), 1> ]
true
{@ (1 : -4 : 0), (1 : 4 : 0), (0 : -1 : 1), (0 : 1 : 1), (-1 : -24 : 2), (-1 :
24 : 2), (1 : -24 : 2), (1 : 24 : 2) @}
\end{term}

\medskip

\noindent
The last set lists the eight rational points on $\overline{\mathcal{C}}$ in \emph{weighted}
projective coordinates $(u:w:z)$ of $\mathbb{P}(1,2,1)$ (weights $1,2,1$). On the affine chart
$z\neq 0$ we have the identification
\[
(y,v)=\bigl(u/z,\ w/z^{2}\bigr).
\]
Thus $(1:\pm 24:2)$ corresponds to $(y,v)=(\tfrac12,\pm 6)$, and the eight rational points are the two points at infinity $(1:\pm 4:0)$ together with the six affine points $(0,\pm 1)$ and $(\pm \tfrac12,\pm 6)$ used in proposition~\ref{prop:Cpoints}.

% ===================== References =====================

\begingroup
\footnotesize

\endgroup

\end{document}